\theoremstyle{plain}
\newtheorem{thm}{Theorem}
\newtheorem{pr}{Proposition}
\newtheorem{cor}{Corollary}
\theoremstyle{definition}
\newtheorem{df}{Definition}
\newcommand{\bd}{B_n}
\newcommand{\sph}{S}
\newcommand{\meal}{m}
\newcommand{\hol}{\mathcal{H}ol}
\newcommand{\Dbb}{\mathbb{D}}
\newcommand{\Tbb}{\mathbb{T}}
\newcommand{\Nbb}{\mathbb{N}}
\newcommand{\dom}{\mathcal{D}}
\newcommand{\ddo}{\partial\mathcal{D}}
\newcommand{\clk}{\sigma}
\newcommand{\diin}{\Theta}
\newcommand{\kla}{I^*(H^2)}
\newcommand{\ksm}{I_*(H^2)}
\numberwithin{equation}{section}
\begin{document}

\date{}

\author{Aleksei B.\ Aleksandrov}
\address{St.~Petersburg Department
of Steklov Mathematical Institute, Fontanka 27, St.~Petersburg 191023, Russia, and
St.~Petersburg State University, 7-9 Universitetskaya Emb., St.~Petersburg 199034, Russia}
\email{alex@pdmi.ras.ru}

\author{Evgueni Doubtsov}
\address{Department of Mathematics and Computer Science,
St.~Petersburg State University,
Line 14th (Vasilyevsky Island), 29, St.~Petersburg 199178,
Russia}
\email{dubtsov@pdmi.ras.ru}

\title[Dominant sets for model spaces]{Dominant sets for model spaces\\ in several variables}

\begin{abstract}
Let $I$ be an inner function in $\mathcal{D} = B_{n_1}\times B_{n_2}\cdots \times B_{n_k}$,
where $B_n$ denotes the open unit ball of $\mathbb{C}^n$, $n\ge 1$.
We construct dominant sets for the space $H^2 \ominus I H^2$,
where $H^2 = H^2(\mathcal{D})$ denotes the standard Hardy space.
\end{abstract}

\keywords{Dominant sets, Hardy spaces, large and small model spaces.}

\thanks{The research on Sections~1 and 3 was supported by Russian Science Foundation (grant No.\ 19-11-00058);
the research on Sections~2 and 4 was supported by Russian Science Foundation (grant No.~23-11-00153).}

\maketitle

\section{Introduction}\label{s_int}

Let $\bd$ denote the open unit ball of $\mathbb{C}^n$, $n\ge 1$,
and let $S_n = \partial\bd$ denote the unit sphere.
For the unit disk $B_1$ and the unit circle $\sph_1$, we also use
the symbols $\Dbb$ and $\Tbb$, respectively.

For $k\in \Nbb$ and $n_j\in \Nbb$, $j=1,2,\dots, k$, put
\[
\dom = \dom[n_1, n_2, \dots, n_k] =
B_{n_1}\times B_{n_2}\cdots \times B_{n_k}  \subset \mathbb{C}^{n_1+ n_2 +\dots + n_k}.
\]
Model examples of $\dom$ are the unit ball $\bd$ and the polydisk $\mathbb{D}^k$.

Let $C(z, \zeta) = C_\dom(z, \zeta)$ denote the Cauchy kernel for $\dom$.
Recall that
\[
  C_{\bd}(z, \zeta) = \frac{1}{(1- \langle z, \zeta \rangle)^{n}}, \quad z\in\bd,\ \zeta\in S_n.
\]
Let $\ddo$ denote the set
$\sph_{n_1}\times \sph_{n_2} \cdots \times \sph_{n_k}$,
which is the Shilov boundary of $\dom$ and is also called the distinguished boundary.
Then
\[
\begin{split}
  C_{\dom}(z, \zeta) =\prod_{j=1}^{k} \frac{1}{(1- \langle z_j, \zeta_j \rangle)^{n_j}},
  \quad z&=(z_1, z_2, \dots, z_k)\in\dom,\\ \zeta&=(\zeta_1, \zeta_2, \dots, \zeta_k)\in\ddo,
\end{split}
\]
where $z_j = (z_{j, 1}, z_{j,2}, \dots, z_{j, n_j}) \in B_{n_j}$ and
$\zeta_j = (\zeta_{j, 1}, \zeta_{j,2}, \dots, \zeta_{j, n_j}) \in \sph_{n_j}$.

The corresponding Poisson type kernel is given by the formula
\[
P(z, \zeta) = \frac{C(z, \zeta) C(\zeta, z)}{C(z, z)},
\quad z\in\dom,\ \zeta\in\ddo.
\]
For $\dom=B_n$, $P(\cdot, \cdot)$ is often called the M\"obius invariant Poisson kernel;
see monographs \cite{Ru80, St94} for further details.

\subsection{Inner functions}
Let $\Sigma$ denote the normalized Lebesgue measure on the distinguished boundary $\ddo$.

\begin{df}\label{d_inner}
A holomorphic function $I:\dom \to \Dbb$ is called \textsl{inner} if
$|I(\zeta)|=1$ for $\Sigma$-almost all points $\zeta\in\ddo$.
\end{df}

In the above definition,
$I(\zeta)$ stands, as usual, for
$\lim_{r\to 1-} I(r\zeta)$.
It is well known that the corresponding limit exists for $\Sigma$-almost all points $\zeta\in\ddo$.
Also, observe that by Definition~\ref{d_inner}, unimodular constants are not inner functions.

\subsection{Clark measures}
Let $M(\ddo)$ denote the space of all complex Borel measures on $\ddo$.
For $\mu\in M(\ddo)$ the Poisson integral $P[\mu]$ is defined by the formula
\[
P[\mu](z) = \int_{\ddo} P(z, \zeta)\, d\mu(\zeta), \quad z\in \dom.
\]

Given an $\alpha\in\Tbb$ and an inner function $I : \dom\to \Dbb$, the expression
\[
\frac{1-|I(z)|^2}{|\alpha-I(z)|^2}= \mathrm{Re} \left(\frac{\alpha+ I(z)}{\alpha- I(z)} \right), \quad z\in \dom,
\]
is positive and pluriharmonic.
Thus, there exists a unique positive measure
$\clk_\alpha= \clk_\alpha[I] \in M(\ddo)$
such that
\[
P[\clk_\alpha](z) = \mathrm{Re} \left(\frac{\alpha+ I(z)}{\alpha- I(z)} \right), \quad z\in \dom.
\]
Since $I$ is an inner function, we have
\[
P[\clk_\alpha](\zeta) =\frac{1-|I(\zeta)|^2}{|\alpha-I(\zeta)|^2}=0 \quad \Sigma\textrm{-a.e.},
\]
hence, $\clk_\alpha = \clk_\alpha[I]$ is a singular measure.
Here and in what follows, this means that
$\clk_\alpha$ is singular with respect to Lebesgue measure $\Sigma$.

\subsection{Clark measures and model spaces}
Let $\hol(\dom)$ denote the space of all holomorphic functions in $\dom$.
For $0<p<\infty$, the standard Hardy space $H^p=H^p(\dom)$
consists of $f\in \hol(\dom)$ such that
\[
\|f\|_{H^p}^p = \sup_{0<r<1} \int_{\ddo} |f(r\zeta)|^p\, d\Sigma(\zeta) < \infty.
\]
As usual, the Hardy space $H^p(\dom)$, $p>0$, is identified with the space
$H^p(\ddo)$ of the corresponding boundary values.

For an inner function $\diin$ on $\Dbb$, the classical
model space
$K_\diin$ is defined by the equality
\[
K_\diin = H^2(\Dbb)\ominus \diin H^2(\Dbb),
\]
see, e.g., monograph \cite{NN02}.
Clark \cite{Cl72} established useful connections between Clark measures and model spaces.
In particular, he introduced and studied a family of canonical unitary operators
$U_\alpha : K_\diin \to L^2(\clk_\alpha)$, $\alpha\in\Tbb$.
Further development of the Clark theory in the unit disk and its applications are described, in particular, in \cite{PS06, Sa07}.

Given an inner function $I$ in $\dom$, consider the following natural analogs of the space $K_\diin$:
\begin{align*}
  \kla &= H^2 \ominus I H^2,\\
  \ksm &= \{f\in H^2:\, I\overline{f} \in H^2_0\},
\end{align*}
where $H^2_0 = \{f\in H^2: f(0)=0\}$.
It is clear that
\[
\ksm \subset \kla,
\]
therefore, $\kla$ is called a large model space and $\ksm$ is called a small model space.
Certain basic results about the spaces $\kla$ and $\ksm$ were obtained in the authors' paper \cite{AD20}.
For an inner function $\diin$ in $\Dbb$, we have
 $\diin^* (H^2(\Dbb)) = \diin_* (H^2(\Dbb)) = K_\diin$.

\subsection{Dominant sets for model spaces}\label{ss_dom}
If $\Theta$ is an inner function in the unit disk, then the concept of a dominant set
for the model space $K_\diin$
was introduced in \cite{BFGHR13}.
Such terminology is motivated by the notion of a dominant sequence for the classical space
 $H^\infty$ (see \cite{BSZ60}).
In the present paper,
this terminology is used for analogs of model spaces in $\dom$.

\begin{df}
A Lebesgue measurable set $E\subset \ddo$ is said to be \textsl{dominant}
 for the large model space
$\kla$ if $\Sigma(E) < 1$ and
\begin{equation}\label{e_dom_df}
\|f\|_{H^2}^2
\le C\int_E
|f|^2\, d\Sigma
\end{equation}
for all $f\in \kla$.
\end{df}
On the one hand, the condition $\Sigma(E)<1$ excludes the corresponding trivial examples of dominant sets.
On the other hand, property~\eqref{e_dom_df} implies the inequality $\Sigma(E)>0$.

\subsection{Existence of dominant sets}
For inner functions $\diin$ defined on $\Dbb$ and having additional properties,
a number of examples of dominant sets were constructed in \cite{BFGHR13}.
Thus, it is natural to ask whether there are dominant sets for an arbitrary model space.
Kapustin (see \cite[Theorem~5.14]{BFGHR13})
obtained a positive answer to this question
in the case of the unit disk.
The main result of the present paper is the following existence theorem related
to the large model spaces in the multidimensional domain $\dom$.

\begin{thm}\label{t_dom_exists}
Let $I$ be an inner function in $\dom$.
Then there exists a dominant set for the large model space $\kla$.
\end{thm}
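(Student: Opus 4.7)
The plan is to adapt Kapustin's disk argument via multivariable Clark theory. First I would reformulate: a measurable $E\subset\ddo$ with $\Sigma(E)<1$ is dominant for $\kla$ iff the positive compression
\[
A_F\colon\kla\to\kla,\qquad A_F f := P_{\kla}(\chi_F f),\qquad F:=\ddo\setminus E,
\]
has operator norm strictly less than one; equivalently, one seeks a measurable $F\subset\ddo$ with $\Sigma(F)>0$ and
\[
\sup\Bigl\{\int_F|f|^2\,d\Sigma :\ f\in\kla,\ \|f\|_{H^2}=1\Bigr\}<1.
\]

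For the candidate I would take a sublevel set of $|I-\alpha|$: pick $\alpha\in\Tbb$ with $\alpha\ne I(0)$ and lying in the support of the pushforward $I_*\Sigma$, and for $\varepsilon>0$ set $F_\varepsilon:=\{\zeta\in\ddo:|I(\zeta)-\alpha|<\varepsilon\}$. Then $\Sigma(F_\varepsilon)>0$ by the choice of $\alpha$, while $\Sigma(\{I=\alpha\})=0$ by boundary uniqueness applied to $I-\alpha$ (a nonzero bounded holomorphic function, since inner $I$ is not a unimodular constant); hence $\Sigma(F_\varepsilon)\downarrow 0$ as $\varepsilon\to 0^+$, so $0<\Sigma(F_\varepsilon)<1$ for small $\varepsilon$. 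The core step is then the reverse Carleson estimate
\[
\int_{F_\varepsilon}|f|^2\,d\Sigma \le C(\varepsilon)\|f\|_{H^2}^2,\qquad f\in\kla,
\]
with $C(\varepsilon)<1$ for some $\varepsilon>0$. The tools are the Aleksandrov disintegration $\int_{\Tbb}\clk_\beta\,dm(\beta)=\Sigma$, valid in $\dom$ because $\int_{\Tbb}P[\clk_\beta]\,dm(\beta)\equiv 1$, together with the Clark-type representations of elements of $\kla$ from \cite{AD20}. One splits the $\Sigma$-integral via disintegration and bounds the contributions from $\beta$ near $\alpha$ (using a Clark factorization $f=(1-\bar\alpha I)h$, so that $|f|\le\varepsilon|h|$ on $F_\varepsilon$) and from $\beta$ away from $\alpha$ (using $\clk_\beta(F_\varepsilon)\to 0$) separately.

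The main obstacle is making the Clark-factorization estimate quantitative. Already in the disk, the auxiliary Cauchy-type transform $h=K_{\clk_\alpha}[U_\alpha f]$ is typically not in $L^2(d\Sigma)$, so one cannot directly bound $\int_{F_\varepsilon}|h|^2\,d\Sigma$ by $\|f\|_{H^2}^2$; the standard workaround is to mollify $\clk_\alpha$ by absolutely continuous approximants, pass to a Frostman shift of $I$, or invoke a weak-type estimate for the Cauchy transform. In the multivariable setting the Clark operator associated with $\kla$ is in general only a partial isometry, with genuine unitarity being tied to the distinction between $\kla$ and $\ksm$, so the disk argument must be transplanted carefully using the machinery of \cite{AD20}. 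This adaptation is where I expect the bulk of the technical work to lie.
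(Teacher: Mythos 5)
Your candidate set is essentially the right one --- up to a $\Sigma$-null set, $F_\varepsilon=\{|I-\alpha|<\varepsilon\}$ is $I^{-1}$ of a small arc, and the paper's dominant set is exactly $E=I^{-1}(Q)$ for an arbitrary measurable $Q\subset\Tbb$ with $0<\meal(Q)<1$ --- but the ``core step'' of your plan is precisely the part you defer, and the machinery you propose for it (the factorization $f=(1-\overline{\alpha}I)h$, mollifying $\clk_\alpha$, Frostman shifts, weak-type bounds for the Cauchy transform) is a detour that, as you yourself note, you cannot close because $h$ need not lie in $L^2(\Sigma)$. The missing idea is much simpler: for every $f\in\kla$ one has the \emph{exact} identity $\int_{\ddo}|f|^2\,d\clk_\beta=\|f\|_{H^2}^2$ for $\meal$-almost every $\beta\in\Tbb$. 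This is obtained by computing $\int_{\ddo}k_z\overline{k_w}\,d\clk_\beta=(k_z,k_w)_{H^2}$ for the reproducing kernels $k_z(\zeta)=(1-I(z)\overline{I(\zeta)})C(z,\zeta)$ of $\kla$, using the closed-form evaluation of $\int_{\ddo}C(z,\zeta)C(\zeta,w)\,d\clk_\beta(\zeta)$ from \cite{AD23} together with $I=\beta$ $\clk_\beta$-a.e., and then passing from the span of the $k_z$ to all of $\kla$ by a subsequence argument based on the disintegration. Your worry that the Clark operator on $\kla$ is only a partial isometry for a \emph{fixed} $\alpha$ is legitimate but irrelevant: the argument never needs the identity for a fixed $\alpha$, only for almost every $\alpha$, and that is exactly what the density argument delivers.

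With this identity in hand there is nothing left to estimate. Since $\clk_\beta$ is carried by $\{I=\beta\}$, the disintegration gives
\[
\int_{I^{-1}(Q)}|f|^2\,d\Sigma
=\int_{\Tbb}\chi_Q(\beta)\int_{\ddo}|f|^2\,d\clk_\beta\,d\meal(\beta)
=\meal(Q)\,\|f\|_{H^2}^2,
\]
an equality rather than an inequality, and $\Sigma(I^{-1}(Q))<1$ because $I^{-1}$ of a set of positive $\meal$-measure has positive $\Sigma$-measure (the pushforward $I_*\Sigma$ and $\meal$ are mutually absolutely continuous). So no splitting of $\beta$ into ``near $\alpha$'' and ``far from $\alpha$'', no control of $\int_{F_\varepsilon}|h|^2\,d\Sigma$, and no quantitative decay of $\clk_\beta(F_\varepsilon)$ is needed. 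As written, your proposal is a plan with an acknowledged gap at its center, not a proof; the gap is filled not by the heavier one-variable technology you list but by the a.e.-$\alpha$ norm identity above.
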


\subsection*{Organization of the paper}
Auxiliary results, in particular, a result on
the disintegration of Lebesgue measure in terms of Clark measures are collected in Section~\ref{s_aux}.
Theorem~\ref{t_dom_exists} is proved in Section~\ref{s_reDom}.
The existence of radial limits almost everywhere with respect to Clark measures is discussed
in the final Section~\ref{s_radial}.

\section{Auxiliary results}\label{s_aux}

\subsection{Supports of singular Clark measures}
Let $\alpha\in\Tbb$ and $I$ be an inner function in $\dom$.
Since $\clk_\alpha$ is a singular measure, well known properties
of Poisson integrals guarantee the following property:
\begin{equation}\label{e_poiss_al}
I(\zeta) =\alpha\quad \textrm{for\ } \clk_\alpha \textrm{-almost all points}\ \zeta\in \ddo.
\end{equation}
In particular, $\clk_\alpha$ and $\clk_\beta$ are mutually singular for $\alpha\neq\beta$.

\subsection{Disintegration of Lebesgue measure}
Let $\meal$ denote the normalized Le\-besgue measure on the unit circle $\Tbb$.
For $\dom = \Dbb$, the following result was obtained in \cite{A87}.

\begin{pr}\label{t_desint}
Let $I: \dom\to\Dbb$ be an inner function, $\alpha\in \Tbb$ and let
$\clk_\alpha = \clk_\alpha[I]$ be the corresponding Clark measure.
Then
\begin{equation}\label{e_desint}
\int_{\Tbb}
\int_{\ddo}  g\, d\clk_\alpha\, d\meal(\alpha) = \int_{\ddo} g\, d\Sigma
\end{equation}
for all $g\in L^1(\ddo)$,
where equality \eqref{e_desint} is understood in the following weak sense:
for $m$-almost all
$\alpha\in\Tbb$, the function $g$ is defined $\clk_\alpha$-a.e.\
and is summable with respect to $\clk_\alpha$.
\end{pr}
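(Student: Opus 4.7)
The plan is to interpret~\eqref{e_desint} as the identity of two positive finite Borel measures on $\ddo$, namely $\Sigma$ and the integrated Clark measure $\mu$ defined by
\[
\int_{\ddo}g\,d\mu=\int_{\Tbb}\int_{\ddo}g\,d\clk_\alpha\,d\meal(\alpha),\qquad g\in C(\ddo),
\]
and to deduce the identity from the fact that the Poisson extensions satisfy $P[\mu]=P[\Sigma]\equiv 1$.

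I would first verify that $\mu$ is a well-defined finite positive Borel measure. The total mass $\clk_\alpha(\ddo)=P[\clk_\alpha](0)=\frac{1-|I(0)|^2}{|\alpha-I(0)|^2}$ is a continuous and uniformly bounded function of $\alpha\in\Tbb$ with $\int_{\Tbb}\clk_\alpha(\ddo)\,d\meal(\alpha)=1$ by the Poisson formula in $\Dbb$ evaluated at $I(0)$. For $w\in\dom$ the map $\alpha\mapsto P[\clk_\alpha](w)=\mathrm{Re}\frac{\alpha+I(w)}{\alpha-I(w)}$ is continuous on $\Tbb$. Since the Poisson kernels $\{P(w,\cdot):w\in\dom\}$ are real-valued and separate points of $\ddo$, the unital subalgebra they generate is uniformly dense in $C(\ddo)$ by Stone--Weierstrass; combined with the uniform bound on $\clk_\alpha(\ddo)$, this yields that $\alpha\mapsto\int g\,d\clk_\alpha$ is continuous for every $g\in C(\ddo)$. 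Hence $g\mapsto\int_{\Tbb}\int g\,d\clk_\alpha\,d\meal(\alpha)$ is a bounded positive linear functional on $C(\ddo)$, and $\mu$ is obtained via Riesz.

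The crucial computation is that of $P[\mu]$. Taking $g=P(z,\cdot)$ for $z\in\dom$ and interchanging the integrals gives
\[
P[\mu](z)=\int_{\Tbb}P[\clk_\alpha](z)\,d\meal(\alpha)=\int_{\Tbb}\frac{1-|I(z)|^2}{|\alpha-I(z)|^2}\,d\meal(\alpha)=1,
\]
again by the Poisson formula in $\Dbb$ applied at $I(z)$. Since also $P[\Sigma]\equiv 1$, injectivity of the Poisson transform on positive Borel measures on $\ddo$ (via the weak-$*$ limit $P[\mu](r\,\cdot)\,d\Sigma\to\mu$, standard on each factor $B_{n_j}$ and inherited by $\dom$ through the product structure) forces $\mu=\Sigma$. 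This establishes~\eqref{e_desint} for $g\in C(\ddo)$.

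The extension from $C(\ddo)$ to $L^1(\Sigma)$ is routine. The identity $\mu=\Sigma$, applied to indicator functions via a monotone class argument, shows that $\Sigma(N)=0$ implies $\clk_\alpha(N)=0$ for $\meal$-a.e.\ $\alpha$, so any Borel representative of $g\in L^1(\Sigma)$ is well-defined $\clk_\alpha$-a.e.\ for $\meal$-a.e.\ $\alpha$. Applying the already-established identity to $|g|$ shows in addition that $g\in L^1(\clk_\alpha)$ for $\meal$-a.e.\ $\alpha$; truncating $g$ at height $N$ and letting $N\to\infty$ by monotone convergence (first for $|g|$, then on the four positive/negative, real/imaginary parts) delivers the weak form of~\eqref{e_desint} asserted in the proposition. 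The principal obstacle is the injectivity of the Poisson transform on $\ddo$; once this standard ingredient is in place, the remainder reduces to the classical Poisson identity on $\Dbb$ applied at the point $I(z)$.
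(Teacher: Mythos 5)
Your overall strategy is sound and, at bottom, the same as the paper's: the paper's proof simply invokes the argument of \cite[Theorem~3.3]{AD20} for $g\in C(\ddo)$ (which approximates $\clk_\alpha$ by the densities $\frac{1-|I(r\zeta)|^2}{|\alpha-I(r\zeta)|^2}\,d\Sigma(\zeta)$, applies Fubini, and uses the one-dimensional Poisson identity $\int_{\Tbb}\frac{1-|w|^2}{|\alpha-w|^2}\,d\meal(\alpha)=1$ at $w=I(r\zeta)$) and then passes to $L^1$ exactly as you do, via the weak interpretation of \eqref{e_desint}. Your variant identifies the averaged measure $\mu$ through its Poisson transform and the injectivity of $P[\,\cdot\,]$ on $M(\ddo)$; that ingredient is standard for $B_n$ and for products, so the core computation $P[\mu]\equiv 1\equiv P[\Sigma]$ is fine, as is the $L^1$ extension.

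There is, however, one step that does not hold up as written: the Stone--Weierstrass argument for the well-definedness of $\mu$. Density of the unital algebra generated by $\{P(w,\cdot):w\in\dom\}$ together with the uniform bound $\sup_\alpha\clk_\alpha(\ddo)<\infty$ transfers continuity of $\alpha\mapsto\int g\,d\clk_\alpha$ from the dense algebra to all of $C(\ddo)$ \emph{only if} that continuity is already known on the algebra; but you have verified it only on the generators $P(w,\cdot)$, and continuity of $\alpha\mapsto\int P(w_1,\cdot)P(w_2,\cdot)\,d\clk_\alpha$ does not follow from continuity of the integrals of the individual factors. So the very definition of $\mu$ (which requires at least $\meal$-measurability of $\alpha\mapsto\int g\,d\clk_\alpha$ for each $g\in C(\ddo)$) is not justified. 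The gap is repairable with tools you already use: either (i) note that $\int g\,d\clk_\alpha=\lim_{r\to1-}\int_{\ddo} g(\zeta)\frac{1-|I(r\zeta)|^2}{|\alpha-I(r\zeta)|^2}\,d\Sigma(\zeta)$ is a pointwise limit of continuous functions of $\alpha$, hence Borel, which suffices (and in fact this approximation plus Fubini gives \eqref{e_desint} for continuous $g$ directly, bypassing $\mu$ altogether --- this is the route of \cite{AD20}); or (ii) deduce weak-$*$ continuity of $\alpha\mapsto\clk_\alpha$ from the uniform mass bound, weak-$*$ compactness, the continuity of $\alpha\mapsto P[\clk_\alpha](z)$, and the injectivity of the Poisson transform that you invoke anyway. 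With either repair the proof is complete.
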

\begin{proof}
If $g\in C(\ddo)$, then it suffices
to repeat the argument used in \cite[Theorem~3.3]{AD20} for $\dom=\bd$.
The special case $g\in C(\ddo)$ implies the general one, since
equality \eqref{e_desint} is understood in the above weak sense.
\end{proof}

Proposition \ref{t_desint} easily implies the following assertion.

\begin{cor}\label{c_sequence}
Let $\{f_n\}_{n=1}^\infty$ be a functional sequence
converging to zero in $L^2(\Sigma)$. Then there exists a subsequence
 $\{f_{n_k}\}_{k=1}^\infty$ such that
$\lim_{k\to\infty}\|f_{n_k}\|_{L^2(\clk_\alpha)}=0$ for $m$-almost all $\alpha\in\Tbb$.
\end{cor}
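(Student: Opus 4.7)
The plan is to feed $g=|f_n|^2$ into the disintegration formula~\eqref{e_desint} and then use the standard fact that convergence in $L^1$ forces pointwise convergence along a subsequence.

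More precisely, first I would apply Proposition~\ref{t_desint} to $g=|f_n|^2 \in L^1(\Sigma)$ for each fixed $n$. This gives a set $E_n\subset \Tbb$ of full $m$-measure such that, for every $\alpha \in E_n$, the function $f_n$ is defined $\clk_\alpha$-a.e.\ and lies in $L^2(\clk_\alpha)$, with
\[
\int_{\Tbb} \|f_n\|_{L^2(\clk_\alpha)}^2\, dm(\alpha) = \int_{\ddo} |f_n|^2\, d\Sigma = \|f_n\|_{L^2(\Sigma)}^2.
\]
Setting $E=\bigcap_{n\ge 1} E_n$, we still have $m(E)=1$, and for $\alpha\in E$ the numbers $\|f_n\|_{L^2(\clk_\alpha)}^2$ are defined simultaneously for all $n$.

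By hypothesis $\|f_n\|_{L^2(\Sigma)}\to 0$, so the displayed identity says that the sequence of nonnegative functions $\alpha \mapsto \|f_n\|_{L^2(\clk_\alpha)}^2$ converges to zero in $L^1(\Tbb, m)$. A standard measure-theoretic argument (convergence in $L^1$ implies a.e.\ convergence along a subsequence) then yields a subsequence $\{f_{n_k}\}$ for which $\|f_{n_k}\|_{L^2(\clk_\alpha)}^2 \to 0$ for $m$-almost all $\alpha\in\Tbb$, which is the desired conclusion.

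There is essentially no obstacle here: the only subtlety is the bookkeeping issue that the disintegration formula a priori produces, for each $n$ separately, an $m$-null exceptional set on $\Tbb$ where $f_n$ might fail to be defined/integrable with respect to $\clk_\alpha$. That is handled cleanly by the countable intersection $E=\bigcap_n E_n$ above, after which everything reduces to the classical subsequence extraction for $L^1$-convergent sequences.
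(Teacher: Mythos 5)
Your proof is correct and is exactly the intended deduction: the paper gives no explicit proof, stating only that Proposition~\ref{t_desint} ``easily implies'' the corollary, and your argument (apply \eqref{e_desint} to $g=|f_n|^2$, note that $\alpha\mapsto\|f_n\|_{L^2(\clk_\alpha)}^2$ tends to zero in $L^1(\Tbb,\meal)$, and extract an $\meal$-a.e.\ convergent subsequence) is the standard route, with the exceptional-set bookkeeping handled properly by the countable intersection.
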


\subsection{Cauchy integrals and Clark measures}\label{s_aux_clk}
The following technical assertion is proved in \cite[Proposition~2.2]{AD23}.

\begin{pr}\label{p_cauchy_dbl}
Let $I : \dom\to\Dbb$ be an inner function, $\alpha\in \Tbb$ and let
$\clk_\alpha = \clk_\alpha[I]$ be the corresponding Clark measure.
Then
  \[
  \int_{\ddo} C(z, \zeta) C(\zeta, w)\, d\clk_\alpha(\zeta) =
  \frac{1- I(z)\overline{I(w)}}{(1-\overline{\alpha}{I(z)})(1-\alpha\overline{I(w)})} C(z,w)
  \]
for all $\alpha\in\Tbb$, $z, w \in\dom$.
\end{pr}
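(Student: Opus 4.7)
The plan is to exploit the sesquiholomorphic structure of both sides and reduce the identity to the diagonal $w=z$, where it will follow directly from the defining property of $\clk_\alpha$. Write $L(z,w)$ for the left-hand side and $R(z,w)$ for the right-hand side of the claimed identity.

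First, I would observe that both $L$ and $R$ are holomorphic in $z\in\dom$ and antiholomorphic in $w\in\dom$. For $R$ this is immediate from the formula, since $I$ is holomorphic, $\overline{I(w)}$ is antiholomorphic in $w$, and $C(z,w)$ is holomorphic in $z$ and antiholomorphic in $w$. For $L$, note that $C(\zeta,w)=\overline{C(w,\zeta)}$, so the integrand $C(z,\zeta)C(\zeta,w)$ is holomorphic in $z$ and antiholomorphic in $w$; the finiteness of $\clk_\alpha$ together with the continuity of $C$ on compact subsets of $\dom\times\ddo$ justifies differentiation under the integral sign.

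Next, I would verify the identity on the diagonal. Using $P(z,\zeta)=C(z,\zeta)C(\zeta,z)/C(z,z)$ and the defining property of $\clk_\alpha$,
\[
L(z,z) = C(z,z)\int_{\ddo} P(z,\zeta)\,d\clk_\alpha(\zeta) = C(z,z)\cdot\mathrm{Re}\frac{\alpha+I(z)}{\alpha-I(z)} = \frac{1-|I(z)|^2}{|\alpha-I(z)|^2}\,C(z,z),
\]
while, since $|\alpha|=1$,
\[
R(z,z) = \frac{1-|I(z)|^2}{(1-\overline{\alpha}I(z))(1-\alpha\overline{I(z)})}\,C(z,z) = \frac{1-|I(z)|^2}{|\alpha-I(z)|^2}\,C(z,z),
\]
so $L(z,z)=R(z,z)$ throughout $\dom$.

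Finally, I would invoke the standard polarization principle: any function $F$ on $\dom\times\dom$ that is holomorphic in $z$, antiholomorphic in $w$, and vanishes on the diagonal must vanish identically. This follows by expanding $F$ in a double power series $\sum c_{\mu,\nu}z^{\mu}\overline{w}^{\nu}$ about any point and invoking the linear independence of the monomials $z^{\mu}\overline{z}^{\nu}$ as real-analytic functions. Applied to $F=L-R$, this yields the claim on all of $\dom\times\dom$. The only genuinely technical step is the first one --- namely, justifying the sesquiholomorphy of $L$ --- but this is routine given that $\clk_\alpha$ is a finite positive measure on the compact set $\ddo$ and $C$ is smooth on $\dom\times\ddo$.
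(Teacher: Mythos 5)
Your argument is correct. Note, however, that the paper itself does not prove Proposition~\ref{p_cauchy_dbl} at all: it simply cites \cite[Proposition~2.2]{AD23}, so there is no internal proof to match yours against. Your route --- verify the identity on the diagonal $w=z$, where it reduces to the defining relation $C(z,z)\,P[\clk_\alpha](z)=C(z,z)\,\mathrm{Re}\frac{\alpha+I(z)}{\alpha-I(z)}$ together with $(1-\overline{\alpha}I(z))(1-\alpha\overline{I(z)})=|\alpha-I(z)|^2$, and then extend by sesquiholomorphic polarization --- is complete and sound. The only points needing care are exactly the ones you flag: sesquiholomorphy of the integral (immediate, since $\clk_\alpha$ is finite, $\ddo$ is compact, and $C(\cdot,\zeta)$ is locally uniformly bounded and holomorphic on $\dom$), and the polarization principle itself; for the latter your power-series justification gives vanishing near the diagonal, after which the identity theorem in $z$ for fixed $w$ (and then in $w$ for fixed $z$) propagates the vanishing to all of $\dom\times\dom$, using connectedness of $\dom$. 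What your approach buys is a short, self-contained derivation straight from the definition of the Clark measure, making the paper independent of the external reference; what the citation buys the authors is brevity and consistency with \cite{AD23}, where the identity is presumably established by a more computational route (e.g.\ expanding the Herglotz kernel and matching Fourier/Cauchy coefficients).
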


\subsection{Reproducing kernels for the spaces $\kla$}
Let $I$ be an inner function in $\dom$. Then
\[
k(z, \zeta) = k(I; z, \zeta) \overset{\textrm{def}}{=}
(1-I(z)\overline{I(\zeta)}) C(z, \zeta)
\]
is the reproducing kernel for the large model space $\kla$.
Indeed, $C(z, \zeta)$ is the reproducing kernel for $H^2(\dom)$, hence,
$I(z) C(z, \zeta) \overline{I(\zeta)}$ is the reproducing kernel for $I H^2(\dom)$.
Therefore, the difference
$C(z,\zeta) - I(z) C(z, \zeta) \overline{I(\zeta)}$
is the reproducing kernel for the space $H^2(\dom) \ominus I H^2(\dom)$.

\section{Dominant sets for model spaces}\label{s_reDom}

\subsection{Preliminary results}
\begin{pr}\label{p_Scalar_Prod}
Let $I: \dom\to\Dbb$ be an inner function, $\alpha\in \Tbb$ and let
$\clk_\alpha = \clk_\alpha[I]$ be the corresponding Clark measure.
Then, for any $f, g\in\kla$, the equality
\begin{equation}\label{e_Scalar_Prod}
(f, g)_{H^2}=
\int_{\ddo}  f{\overline g}\, d\clk_\alpha
\end{equation}
holds for $m$-almost all $\alpha\in\Tbb$.
\end{pr}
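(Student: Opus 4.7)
The plan is to verify \eqref{e_Scalar_Prod} first on reproducing kernels of $\kla$, where the identity reduces to Proposition~\ref{p_cauchy_dbl}, and then to extend to the whole space by density together with Corollary~\ref{c_sequence}.

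For the kernel case, fix $z, w \in \dom$ and take $f = k(\cdot, z)$, $g = k(\cdot, w)$. By \eqref{e_poiss_al}, $I(\xi) = \alpha$ for $\clk_\alpha$-almost all $\xi \in \ddo$, so, using $\overline{C(\xi, w)} = C(w, \xi)$, one has
\[
k(\xi, z)\,\overline{k(\xi, w)} = (1 - \alpha\overline{I(z)})(1 - \overline{\alpha}I(w))\, C(\xi, z)\, C(w, \xi)
\]
for $\clk_\alpha$-a.e.\ $\xi$. Proposition~\ref{p_cauchy_dbl}, applied with $z$ and $w$ interchanged, evaluates $\int_{\ddo} C(w, \xi) C(\xi, z)\, d\clk_\alpha(\xi)$ in a form that exactly cancels the two Clark factors above, leaving
\[
\int_{\ddo} k(\xi, z)\,\overline{k(\xi, w)}\, d\clk_\alpha(\xi) = (1 - I(w)\overline{I(z)})\, C(w, z) = k(w, z).
\]
Since $(k(\cdot, z), k(\cdot, w))_{H^2} = k(w, z)$ by the reproducing property, \eqref{e_Scalar_Prod} holds for \emph{every} $\alpha \in \Tbb$ in this special case. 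By sesquilinearity it holds, still for every $\alpha$, whenever $f$ and $g$ are finite linear combinations of kernels $\{k(\cdot, z) : z \in \dom\}$.

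For general $f, g \in \kla$, choose sequences of such linear combinations $f_n, g_n$ with $f_n \to f$ and $g_n \to g$ in $H^2$; density is immediate from the reproducing property (the orthogonal complement of the span of the kernels in $\kla$ is trivial). Then $(f_n, g_n)_{H^2} \to (f, g)_{H^2}$. Applying Corollary~\ref{c_sequence} successively to $\{f_n - f\}$ and $\{g_n - g\}$ and passing to a common subsequence $\{n_k\}$, we may assume $f_{n_k} \to f$ and $g_{n_k} \to g$ in $L^2(\clk_\alpha)$ for $m$-almost all $\alpha \in \Tbb$. The Cauchy--Schwarz inequality in $L^2(\clk_\alpha)$ then gives
\[
\int_{\ddo} f_{n_k}\,\overline{g_{n_k}}\, d\clk_\alpha \longrightarrow \int_{\ddo} f\,\overline{g}\, d\clk_\alpha
\]
for those $\alpha$, and the identity \eqref{e_Scalar_Prod}, already valid for every $(f_{n_k}, g_{n_k})$, survives in the limit.

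The main obstacle is precisely this last step: for fixed $f, g$ the ``bad'' set of $\alpha$ on which $L^2(\clk_\alpha)$-convergence could fail depends on the approximating sequence, and one cannot a priori merge such sets into a single $m$-null set. Corollary~\ref{c_sequence} is tailor-made for exactly this difficulty, converting $L^2(\Sigma)$-convergence into $L^2(\clk_\alpha)$-convergence for $m$-a.e.\ $\alpha$ along a subsequence. The algebraic computation on the kernels, by contrast, is clean and involves no exceptional $\alpha$ at all.
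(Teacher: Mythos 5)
Your proof is correct and follows essentially the same route as the paper: verify the identity on reproducing kernels via Proposition~\ref{p_cauchy_dbl} and property~\eqref{e_poiss_al}, then extend to all of $\kla$ by density using Corollary~\ref{c_sequence}. The only (cosmetic) difference is that the paper first establishes the norm identity \eqref{e_Norm} for all $f\in\kla$ and then polarizes, whereas you approximate $f$ and $g$ simultaneously and pass to the limit in the bilinear form via Cauchy--Schwarz.
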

\begin{proof}
Put $k_z(\zeta) = k(z, \zeta)$.
Successively applying explicit formulas for the kernels
$k_z(\zeta)$ and $k_w(\zeta)$, $z,w\in\dom$, property \eqref{e_poiss_al}
and Proposition~\ref{p_cauchy_dbl}, we obtain
\[
\begin{split}
   \int_{\ddo} k_z(\zeta) \overline{k_w (\zeta)}
&\, d\clk_\alpha[I](\zeta) \\
&= \int_{\ddo} \left(1-\overline{I(z)}I(\zeta)\right) C(\zeta, z)
    \left(1-I(w)\overline{I(\zeta)}\right) C(w, \zeta) \, d\clk_\alpha[I](\zeta) \\
&= (1-\alpha\overline{I(z)})(1-\overline{\alpha}I(w)) \int_{\ddo} C(\zeta, z) C(w, \zeta)\, d\clk_\alpha[I](\zeta) \\
&= \left(1-\overline{I(z)}I(w)\right) C(w,z)\\
&= k(w,z) = (k_z, k_w)_{H^2}.
\end{split}
\]
Thus, for all $\alpha\in\Tbb$, equality~\eqref{e_Scalar_Prod}
holds for all finite linear combinations of $k_z$, $z\in\dom$.
In particular, the equality
\begin{equation}\label{e_Norm}
\|f\|_{H^2}^2=
\int_{\ddo}  |f|^2\, d\clk_\alpha
\end{equation}
holds for $m$-almost all $\alpha\in\Tbb$ for any function $f$ in the
linear span of the family $\{k_z\}_{z\in\dom}$.
Applying Corollary~\ref{c_sequence}, we extend this equality to all functions
$f$ in the space $\kla$.
To deduce equality \eqref{e_Scalar_Prod} from \eqref{e_Norm}, it suffices
to apply the standard formula expressing the scalar product in terms of the corresponding norms.
\end{proof}

Before formulating the next assertion, observe that the composition
$\Phi\circ I$ is well defined for any function
 $\Phi\in L^1(\Tbb) = L^1(\Tbb, \meal)$.
Indeed, if $I(0)=0$, then $\Sigma(I^{-1}(Q))=m(Q)$ for any measurable set $Q\subset\Tbb$,
where $I^{-1}(Q) = \{\zeta\in\ddo:\, I(\zeta) \in  Q\}$.
If $I$ is an arbitrary inner function, then we have
$\psi\circ I(0)=0$, where
\[
\psi(z)=\frac{I(0)-z}{1-\overline{I(0)}z}, \quad z\in\Dbb.
\]
Therefore, the properties $\Sigma(I^{-1}(Q))=0$ and $m(Q)=0$ are equivalent;
hence, the composition $\Phi\circ I$ is defined almost everywhere and is measurable.

\begin{cor}\label{c_comp}
Let $\Phi\in L^1(\Tbb)$. Then
\[
\int_{\ddo} (\Phi\circ I) f\overline g\,d\Sigma=\left(\int_{\Tbb}\Phi\, dm\right) (f,g)_{H^2}
\]
for all $f,g\in\kla$.
\end{cor}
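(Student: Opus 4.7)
The plan is to combine the three tools from Section~\ref{s_aux} and Proposition~\ref{p_Scalar_Prod}: the disintegration formula \eqref{e_desint}, the fact \eqref{e_poiss_al} that $I\equiv\alpha$ $\clk_\alpha$-a.e., and the norm/scalar-product identity \eqref{e_Scalar_Prod}. Morally the proof is one line: apply disintegration to $(\Phi\circ I)f\overline g$, pull $\Phi\circ I$ out of the inner integral as the constant $\Phi(\alpha)$, and use Proposition~\ref{p_Scalar_Prod} to replace the remaining $\clk_\alpha$-integral by $(f,g)_{H^2}$.

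First I would justify the integrability that Proposition~\ref{t_desint} requires. Since $\Phi\circ I$ is measurable by the preceding paragraph and $f,g\in H^2\subset L^2(\Sigma)$, the product $(\Phi\circ I)f\overline g$ is measurable; I would show it lies in $L^1(\Sigma)$. Tonelli applied to \eqref{e_desint} with $g$ replaced by $|\Phi\circ I||f||g|$, together with $|\Phi\circ I|=|\Phi(\alpha)|$ $\clk_\alpha$-a.e., gives
\[
\int_{\ddo}|\Phi\circ I||f||g|\,d\Sigma=\int_{\Tbb}|\Phi(\alpha)|\int_{\ddo}|f||g|\,d\clk_\alpha\,d\meal(\alpha).
\]
Cauchy--Schwarz in $L^2(\clk_\alpha)$ and the norm identity \eqref{e_Norm} (which holds for $m$-a.e.\ $\alpha$) bound the inner integral by $\|f\|_{H^2}\|g\|_{H^2}$, so the whole expression is at most $\|\Phi\|_{L^1(\Tbb)}\|f\|_{H^2}\|g\|_{H^2}<\infty$.

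Having this integrability, I would apply Proposition~\ref{t_desint} to $(\Phi\circ I)f\overline g$ to obtain
\[
\int_{\ddo}(\Phi\circ I)f\overline g\,d\Sigma=\int_{\Tbb}\int_{\ddo}(\Phi\circ I)f\overline g\,d\clk_\alpha\,d\meal(\alpha).
\]
By \eqref{e_poiss_al}, $\Phi\circ I=\Phi(\alpha)$ holds $\clk_\alpha$-a.e.\ for every $\alpha\in\Tbb$, so the inner integral equals $\Phi(\alpha)\int_{\ddo}f\overline g\,d\clk_\alpha$. Proposition~\ref{p_Scalar_Prod} identifies this latter integral with $(f,g)_{H^2}$ for $m$-a.e.\ $\alpha$, and integrating the scalar $\Phi(\alpha)(f,g)_{H^2}$ against $m$ yields the claimed formula.

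The only real obstacle is the integrability step: $\Phi$ is merely $L^1$ and $f,g$ are merely $L^2$, so $(\Phi\circ I)f\overline g$ need not be in $L^1(\Sigma)$ a priori, and without that we cannot legitimately invoke the disintegration formula. The argument above sidesteps this by passing through the fiber-wise $L^2(\clk_\alpha)$-norms, where Proposition~\ref{p_Scalar_Prod} miraculously supplies a uniform bound $\|f\|_{L^2(\clk_\alpha)}=\|f\|_{H^2}$ for $m$-a.e.\ $\alpha$. Everything else is bookkeeping.
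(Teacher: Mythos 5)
Your proof is correct and uses exactly the same ingredients as the paper's: the disintegration formula of Proposition~\ref{t_desint}, property~\eqref{e_poiss_al} to identify $\Phi\circ I$ with the constant $\Phi(\alpha)$ under $\clk_\alpha$, and Proposition~\ref{p_Scalar_Prod}; you merely run the chain of equalities in the opposite direction. Your explicit Tonelli argument for the $L^1(\Sigma)$-integrability of $(\Phi\circ I)f\overline g$ is a worthwhile addition that the paper leaves implicit.
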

\begin{proof}
Applying Proposition~\ref{p_Scalar_Prod}, property~\eqref{e_poiss_al} and Proposition~\ref{t_desint},
we obtain
\[
\begin{split}
\left(\int_{\Tbb}\Phi\, dm\right)\!(f,g)_{H^2}
&= \int_{\Tbb} \Phi(\alpha) (f,g)_{H^2}\, d\meal(\alpha) \\
&=\int_{\Tbb}\int_{\ddo} \Phi(\alpha) f(\zeta){\overline g}(\zeta)\, d\clk_\alpha(\zeta)\, d\meal(\alpha)  \\
&=\int_{\Tbb}\int_{\ddo} (\Phi\circ I) f{\overline g}\, d\clk_\alpha\, d\meal(\alpha) \\
&=\int_{\ddo} (\Phi\circ I) f\overline g\,d\Sigma,
\end{split}
\]
as required.
\end{proof}

\subsection{Proof of Theorem~\ref{t_dom_exists}}
Fix a measurable set $Q\subset \Tbb$ such that $0<\meal(Q)<1$.
Consider the set $I^{-1}(Q)$.
Applying Corollary~\ref{c_comp} with $\Phi=\chi_Q$, we obtain
\[
m(Q)\|f\|_{H^2}^2=\int_{I^{-1}(Q)}|f|^2\,d\Sigma, \quad f\in\kla,
\]
in particular, estimate \eqref{e_dom_df} holds for $E = I^{-1}(Q)$.
Next, recall that $\Sigma(I^{-1}(Q))=m(Q)$ provided that $I(0)=0$.
Thus, for an arbitrary inner function $I$, the condition $\meal(Q)<1$
guarantees that $\Sigma(I^{-1}(Q))<1$.
Therefore, $I^{-1}(Q)$ is a dominant set, and
the proof of Theorem~\ref{t_dom_exists} is finished.

\section{Radial behavior of functions from model spaces}\label{s_radial}

Let $I:\dom\to \Dbb$ be an inner function.
As indicated in Proposition~\ref{t_desint}, if $g\in L^1(\ddo)$, then
for $m$-almost all $\alpha\in\Tbb$, the function $g$ is defined
$\clk_\alpha$-a.e.\ and is summable with respect to $\clk_\alpha = \clk_\alpha[I]$.
Given an $f\in\kla$, one can draw similar conclusions on the existence of radial limits
$f(\zeta) = \lim_{r\to 1-} f(r\zeta)$.
Indeed, the radial limit $f(\zeta)$ is defined for $\Sigma$-almost all $\zeta\in\ddo$.
Hence, properties of the family $\{\clk_\alpha\}_{\alpha\in\Tbb}$
guarantee that for $\meal$-almost all $\alpha\in\Tbb$, the value $f(\zeta)$
is defined for $\clk_\alpha$-almost all points $\zeta\in\ddo$.
Therefore, it is natural to ask whether the corresponding statement holds for \textit{all} $\alpha\in\Tbb$.
Poltoratski \cite{Po93} obtained a positive answer to this question for $\dom=\Dbb$.

\subsection{Poltoratski's theorem for the space $\ksm$}
In this section, we show that
consideration of slice functions allows one to extend
Poltoratski's theorem to the functions from the small model space $\ksm$.

Let $\alpha\in\Tbb$ and $\clk_\alpha = \clk_\alpha[I]$ be a Clark measure.
Put $u=P[\clk_\alpha]$. For $\xi\in\ddo$, consider the slice function
$u_\xi(z) := u(z\xi)$, $z\in\Dbb$.
Since $u_\xi$ is a positive harmonic function on $\Dbb$,
it is the one-dimensional Poisson integral of a positive measure defined on $\Tbb$.
We use the notation $(\clk_\alpha)_\xi$ for the corresponding slice-measure.

The following formula for integration on slices is well known:
\begin{equation}\label{e_slices}
 \int_{\ddo} g\, d\Sigma = \int_{\ddo} \int_{\Tbb} g(\lambda\zeta)\, d\meal(\lambda)\, d\Sigma(\zeta), \quad g\in C(\ddo).
\end{equation}
The above formula guarantees (see, e.g., \cite[Proposition~2.1]{AD20} in the case, where $\dom=\bd$) that
\begin{equation}\label{e_clk_decomp}
\int_{\ddo} g\, d\clk_\alpha = \int_{\ddo} \int_{\Tbb} g(\lambda\xi)\, d(\clk_\alpha)_\xi (\lambda)\, d\Sigma(\xi)
\end{equation}
for all $g\in C(\ddo)$.
Moreover, standard arguments show that equality \eqref{e_clk_decomp} extends to all
bounded Borel functions $g$ defined on $\ddo$.

\begin{pr}\label{p_pltr}
Let $I$ be an inner function in $\dom$, $f\in\ksm$ and $\alpha\in\Tbb$.
Then the radial limit $\lim_{r\to 1-} f(r\zeta)$ exists for
$\clk_\alpha$-almost all points $\zeta\in\ddo$.
\end{pr}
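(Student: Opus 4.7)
The plan is to reduce the statement to Poltoratski's one-variable theorem by slicing. Fix $I$, $f\in\ksm$, and $\alpha\in\Tbb$. For $\xi\in\ddo$ define the slice functions $I_\xi(z):=I(z\xi)$ and $f_\xi(z):=f(z\xi)$ on $\Dbb$. The slice formula \eqref{e_slices} and the identity $\|f\|_{H^2(\dom)}^2=\int_{\ddo}\|f_\xi\|_{H^2(\Dbb)}^2\,d\Sigma(\xi)$ ensure that for $\Sigma$-almost every $\xi$, both $I_\xi$ and $f_\xi$ lie in $H^2(\Dbb)$, and moreover $|I_\xi|=1$ $\meal$-a.e.\ on $\Tbb$, so $I_\xi$ is inner on $\Dbb$ (apart from a negligible set of exceptional $\xi$ where $I_\xi$ is a unimodular constant, which will be harmless in what follows).

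Next I would use the defining property of the small model space to bring $f_\xi$ into a one-variable model space. Since $f\in\ksm$, the product $I\overline{f}$ belongs to $H^2_0(\dom)$; taking slices, $I_\xi\overline{f_\xi}\in H^2_0(\Dbb)$ for $\Sigma$-a.e.\ $\xi$, which means exactly that $f_\xi\in K_{I_\xi}$ in the classical one-variable sense. In parallel, I would identify the slice measure $(\clk_\alpha[I])_\xi$ with the one-dimensional Clark measure $\clk_\alpha[I_\xi]$. This is immediate from uniqueness of the Poisson representation: both measures on $\Tbb$ have the same Poisson integral, namely
\[
z\longmapsto P[\clk_\alpha[I]](z\xi)=\mathrm{Re}\!\left(\frac{\alpha+I_\xi(z)}{\alpha-I_\xi(z)}\right).
\]

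With these preparations in place, Poltoratski's theorem in $\Dbb$ applies to the pair $(I_\xi,f_\xi)$ for \emph{every} $\alpha\in\Tbb$: the radial limit $\lim_{r\to 1-}f_\xi(r\lambda)$ exists for $\clk_\alpha[I_\xi]$-almost every $\lambda\in\Tbb$. Because $f_\xi(r\lambda)=f(r\lambda\xi)$, this says that the set
\[
E_\xi:=\{\lambda\in\Tbb:\lim_{r\to 1-}f(r\lambda\xi)\text{ does not exist}\}
\]
has $(\clk_\alpha[I])_\xi$-measure zero for $\Sigma$-a.e.\ $\xi$. Letting $E:=\{\zeta\in\ddo:\lim_{r\to 1-}f(r\zeta)\text{ does not exist}\}$ and applying the disintegration~\eqref{e_clk_decomp} to the bounded Borel function $\chi_E$, I obtain
\[
\clk_\alpha(E)=\int_{\ddo}(\clk_\alpha)_\xi(E_\xi)\,d\Sigma(\xi)=0,
\]
which is the claim.

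I expect the main technical point to be the identification $(\clk_\alpha[I])_\xi=\clk_\alpha[I_\xi]$ together with the small-model-space condition surviving slicing: one must verify that taking slices really produces an element of the one-variable $K_{I_\xi}$, and this is exactly where the hypothesis $f\in\ksm$ (rather than only $f\in\kla$) is used, since $I\overline{f}\in H^2_0$ is a pointwise condition that passes to slices, whereas orthogonality to $IH^2$ does not transfer as cleanly. The degenerate slices where $I_\xi$ is a unimodular constant form a $\Sigma$-null set, so they contribute nothing to the disintegration and can be safely discarded.
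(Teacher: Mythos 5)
Your proposal is correct and follows essentially the same route as the paper: slice $I$ and $f$, observe that the pointwise condition $I\overline{f}\in H^2_0$ passes to slices so that $f_\xi$ lies in the one-variable model space of $I_\xi$, identify $(\clk_\alpha[I])_\xi$ with $\clk_\alpha[I_\xi]$, apply Poltoratski's theorem on each slice, and integrate via~\eqref{e_clk_decomp} with $g=\chi_E$. The only cosmetic difference is that the worry about slices where $I_\xi$ is a unimodular constant is vacuous, since $I_\xi$ maps $\Dbb$ into $\Dbb$ and hence cannot be unimodular at an interior point.
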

\begin{proof}
Let $f\in\ksm$.
By \eqref{e_slices}, the slice functions $I_\xi$ and
$f_\xi$ have the following properties for $\Sigma$-almost all $\xi\in\ddo$:
\begin{itemize}
  \item $I_\xi$ is an inner function in $\Dbb$,
  \item $f_\xi\in (I_\xi)_*(H^2(\Dbb)) \subset H^2(\Dbb)$.
\end{itemize}
Let $\alpha\in\Tbb$. Assume that the above properties hold for $\xi\in\ddo$.
Since $(I_\xi)_*(H^2(\Dbb))$ is a classical model space and $f_\xi\in (I_\xi)_*(H^2(\Dbb))$,
the radial limit
$\lim_{r\to 1-} f_\xi(r\lambda)$ exists for $\clk_\alpha[I_\xi]$-almost all $\lambda\in\Tbb$
by Poltoratski's theorem \cite{Po93}.
Now, observe that the Clark measure $\clk_\alpha[I_\xi]$
coincides with the slice-measure $(\clk_\alpha[I])_\xi$.
Hence $(\clk_\alpha[I])_\xi(E_\xi) =0$, where $E$ denotes the set of those points $\zeta\in\ddo$,
for which the limit $\lim_{r\to 1-} f(r\zeta)$ does not exist, and $E_\xi = \{\lambda\in\Tbb: \lambda\xi\in E\}$.
Since the stated property holds for $\Sigma$-almost all points $\xi\in\ddo$,
formula \eqref{e_clk_decomp}
with $g=\chi_E$
guarantees that $\clk_\alpha(E)=0$, as required.
\end{proof}

\bibliographystyle{amsplain}
\providecommand{\bysame}{\leavevmode\hbox to3em{\hrulefill}\thinspace}
\providecommand{\MR}{\relax\ifhmode\unskip\space\fi MR }
\providecommand{\MRhref}[2]{%
  \href{http://www.ams.org/mathscinet-getitem?mr=#1}{#2}
}
\providecommand{\href}[2]{#2}

\end{document}